\newtheorem{theorem}{Theorem}[section]
\newtheorem{lemma}{Lemma}[section]
\newtheorem{proposition}{Proposition}[section]
\newtheorem{corollary}{Corollary}[section]
\theoremstyle{remark}
\newtheorem{remark}{Remark}[section]
\newcommand{\teich}{\mathcal{T}}
\newcommand{\ext}{{\rm Ext}}
\newcommand{\Bers}[1]{\mathcal{T}^B_{#1}}
\newcommand{\ThursM}{\mu_{Th}}
\newcommand{\PThursM}{\hat{\mu}_{Th}}
\newcommand{\convgenus}{{\boldsymbol \xi}}
\newcommand{\proje}{\boldsymbol{\psi}}
\newcommand{\pushThursMBers}{{\boldsymbol \mu}^B}
\newcommand{\ray}{{\boldsymbol r}}
\begin{document}

\title[Poisson integral formula]{Bounded {P}luriharmonic {F}unctions and {H}olomorphic {F}unctions on {T}eichm\"uller {S}pace II \\
-- Poisson integral formula --}
\author{Hideki Miyachi}

\date{\today}
\dedicatory{Dedicated to Professor~Yunping Jiang on the~occasion of his~65-th birthday.}
\address{School of Mathematics and Physics,
College of Science and Engineering,
Kanazawa University,
Kakuma-machi, Kanazawa,
Ishikawa, 920-1192, Japan
}
\email{miyachi@se.kanazawa-u.ac.jp}
\thanks{This work is partially supported by JSPS KAKENHI Grant Numbers
25K00909,
23K22396 ,
20K20519}
\subjclass[2010]{32G05, 32G15, 32U35, 57M50}
\keywords{Teichm\"uller space, Teichm\"uller distance, Pluriharmonic function, Pluriharmonic measure, Poisson integral formula, Jensen measure}

\begin{abstract}
In this paper, we establish the Poisson integral formula for bounded pluriharmonic functions on the Teichm\"uller space of analytically finite Riemann surfaces of type $(g,m)$ with $2g-2+m>0$. We also discuss a version of the F. and M. Riesz theorem concerning the value distribution of plurisubharmonic functions on the Teichm\"uller space, as well as a Teichm\"uller-theoretic interpretation of the mean value theorem for pluriharmonic functions.
\end{abstract}

\maketitle
\tableofcontents

\section{Introduction}

This paper is a continuation of the author's previous works \cite{MR4028456}, \cite{MR4633651}, and \cite{MR4830064}.  
A part of the results are announced in \cite{bounded-Function_Ergodic}.

\subsection{Background}

Fix a base point $x_0 \in \teich_{g,m}$, and let $\Bers{x_0}$ denote the Bers slice with base point $x_0$. Let $\partial \Bers{x_0}$ be the Bers boundary of $\Bers{x_0} \cong \teich_{g,m}$, and let $\partial^{\mathrm{ue}} \Bers{x_0}$ denote the subset of $\partial \Bers{x_0}$ consisting of totally degenerate groups without accidental parabolic transformations, whose ending laminations support minimal, filling, and uniquely ergodic measured laminations (see \S\ref{subsec:ml} and \S\ref{subsec:BoundarygroupswithoutAPT} for definitions).

We define a function $\mathbb{P} \colon \teich_{g,m} \times \teich_{g,m} \times \partial \Bers{x_0} \to \mathbb{R}$ by
\begin{equation}
\label{eq:Poisson-kernel}
\mathbb{P}(x,y,\varphi) =
\begin{cases}
{\displaystyle
\left(
\frac{\ext_x(\lambda_\varphi)}{\ext_y(\lambda_\varphi)}
\right)^{3g - 3 + m}
}
& \text{if } \varphi \in \partial^{\mathrm{ue}} \Bers{x_0}, \\
1 & \text{otherwise},
\end{cases}
\end{equation}
where $\lambda_\varphi$ is the measured lamination whose support is the ending lamination of $\varphi$.

Let $\pushThursMBers_{x} = {\boldsymbol \mu}^{B,x_0}_x$ be the pushforward of the normalized Thurston measure associated to $x \in \teich_{g,m}$ via the canonical projection to $\mathcal{PML}$ (see \S\ref{subsec:pushforwardmeasure-Bers-boundary}). The measure $\pushThursMBers_x$ on $\partial \Bers{x_0}$ is the pluriharmonic measure on $\teich_{g,m}$ with pole at $x$ in the sense of Demailly \cite{MR881709} (cf.\ \cite{MR4633651}).

For an integrable function $g$ on $\partial \Bers{x_0}$ with respect to the measure $\pushThursMBers_{x_0}$, we define the \emph{Poisson integral} by
\begin{equation}
\label{eq:PI}
\mathbb{P}(g)(x) = \int_{\partial \Bers{x_0}} g(\varphi) \, d\pushThursMBers_x(\varphi)
= \int_{\partial \Bers{x_0}} g(\varphi) \, \mathbb{P}(x_0,x,\varphi) \, d\pushThursMBers_{x_0}(\varphi)
\end{equation}
for $x \in \teich_{g,m}$.

\subsection{Main theorem}
The main theorem generalizes the result of \cite{MR4633651}, which establishes a full analogue of Fatou's theorem for the unit disk (\cite{MR1555035}).

\begin{theorem}[Poisson integral formula]
\label{thm:main1}
Fix $x_0 \in \teich_{g,m}$.
Let $u$ be a bounded pluriharmonic function on $\teich_{g,m}$, and let $u^*$ denote its radial limit.
Then,
\[
u(x) = \mathbb{P}(u^*)(x)
\]
for all $x \in \teich_{g,m}$.
\end{theorem}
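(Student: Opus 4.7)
My plan is to adapt the classical deduction of Poisson's formula from Fatou's theorem on the unit disk: form the Poisson integral $v=\mathbb{P}(u^*)$, verify that $v$ is a bounded pluriharmonic function whose radial limits agree with $u^*$ almost everywhere, and then invoke a uniqueness principle for bounded pluriharmonic functions with vanishing radial limits.

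The preliminary that $v=\mathbb{P}(g)$ is bounded pluriharmonic for any bounded Borel $g$ should be essentially formal in the present setting: boundedness by $\|g\|_\infty$ is immediate from the fact that $\pushThursMBers_x$ is a probability measure, while pluriharmonicity follows by differentiating under the integral sign, once one has the pluriharmonicity of the kernel $x\mapsto\mathbb{P}(x_0,x,\varphi)=(\ext_x(\lambda_\varphi)/\ext_{x_0}(\lambda_\varphi))^{3g-3+m}$ for $\pushThursMBers_{x_0}$-a.e.\ $\varphi\in\partial^{\mathrm{ue}}\Bers{x_0}$ — which is essentially the content of the author's earlier works identifying $\pushThursMBers_x$ as the pluriharmonic measure in Demailly's sense. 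The real content lies in the next two steps. First, I would show the identification $v^*(\varphi)=u^*(\varphi)$ for $\pushThursMBers_{x_0}$-a.e.\ $\varphi$: letting $\ray_\varphi$ denote the Teichm\"uller ray from $x_0$ converging to $\varphi$, the natural mechanism is that the Poisson kernel behaves as an approximate identity, i.e.\ $\pushThursMBers_{\ray_\varphi(t)}$ concentrates on $\varphi$ as $t\to\infty$ in a sense strong enough to pass limits against the bounded measurable $u^*$. This reduces to controlling the extremal-length ratio in the kernel over a $\mathcal{PML}$-neighborhood of $\lambda_\varphi$ and exploiting the uniquely ergodic hypothesis to force decay away from $\lambda_\varphi$. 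Once this is in hand, $w=u-v$ is bounded pluriharmonic on $\teich_{g,m}$ with radial limits vanishing $\pushThursMBers_{x_0}$-a.e.; a uniqueness principle, derivable from Demailly's maximum principle for the pluriharmonic measure $\pushThursMBers_{x_0}$ and presumably isolated in the earlier papers \cite{MR4028456,MR4633651,MR4830064}, then forces $w\equiv 0$, giving $u=\mathbb{P}(u^*)$.

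The main obstacle will be the approximate-identity step: establishing the concentration $\pushThursMBers_{\ray_\varphi(t)}\to\delta_\varphi$ in a form robust enough to allow pointwise identification of radial limits despite $u^*$ being defined only almost everywhere on the topologically intricate set $\partial^{\mathrm{ue}}\Bers{x_0}$. A subsidiary difficulty is reconciling the Bers-boundary geometry, in which $\varphi$ and the Fatou statement naturally live, with the $\mathcal{PML}$ geometry in which the Thurston measure and the Poisson kernel are defined; bridging these two pictures in the limit $t\to\infty$ is precisely where the uniquely ergodic hypothesis and the most delicate extremal-length estimates have to do their work.
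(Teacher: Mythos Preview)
Your proposed route---build $v=\mathbb{P}(u^*)$, show it is bounded pluriharmonic with the same radial limits as $u$, then invoke a uniqueness principle---is the classical indirect argument, but it is \emph{not} what the paper does, and in the present setting at least two of your three steps are genuine gaps rather than formalities.

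First, your step (a). The fact that $\pushThursMBers_x$ is the pluriharmonic measure in Demailly's sense only yields $u(x)=\int u^*\,d\pushThursMBers_x$ for functions $u$ already known to be pluriharmonic with suitable boundary behaviour; it does \emph{not} say that $x\mapsto\int g\,d\pushThursMBers_x$ is pluriharmonic for an arbitrary bounded Borel $g$. Concretely, the kernel $x\mapsto (\ext_{x_0}(\lambda_\varphi)/\ext_x(\lambda_\varphi))^\convgenus$ is built from extremal length, which is only $C^1$ on $\teich_{g,m}$, and the paper itself, in the proof of \Cref{coro:main1dash}, asserts only that $\mathbb{P}(g)\in C^1_b(\teich_{g,m})$---pointedly not that it is pluriharmonic. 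So you have no reason to expect $v=\mathbb{P}(u^*)$ to be pluriharmonic, and without that your difference $w=u-v$ is not an object to which any pluriharmonic uniqueness principle applies.

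Second, your step (c), the uniqueness principle ``bounded pluriharmonic with a.e.\ vanishing radial limits $\Rightarrow$ identically zero'', is precisely the final clause of \Cref{coro:main1dash}, which the paper derives \emph{as a consequence} of \Cref{thm:main1}. Attempting to prove it independently---say by restricting to Teichm\"uller disks and applying one-variable Fatou---runs into the difficulty that the a.e.\ full set $\mathcal{E}_0\subset\mathcal{PML}$ may meet the boundary circle of a fixed disk in a null set; the device that rescues this is exactly the disintegration argument below, so one ends up reproducing the paper's proof inside the ``uniqueness'' step.

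The paper's actual argument bypasses all of this by a direct computation. Given $x$, it fibers $\mathcal{PML}$ over the projective space $\mathbb{P}\mathcal{ML}_x=\mathcal{PML}/\mathbb{S}^1$ of Teichm\"uller disks through $x$; by \Cref{prop:dis_inte_angle} the disintegration of $\PThursM^x$ along this projection is the normalized angle measure $\Theta_t$ on each fiber. Since $u\circ\Phi_{q_t}$ is bounded harmonic on $\mathbb{D}$, the one-variable Poisson formula gives $\int_0^{2\pi}(u\circ\Phi_{q_t})^*(e^{i\theta})\,d\theta/2\pi=u(\Phi_{q_t}(0))=u(x)$, and integrating this constant over $\mathbb{P}\mathcal{ML}_x$ yields $\mathbb{P}(u^*)(x)=u(x)$ immediately. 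No pluriharmonicity of $\mathbb{P}(u^*)$, no approximate-identity estimate, and no uniqueness principle are needed.
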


The notion of radial limits for bounded pluriharmonic functions on $\teich_{g,m}$, as it appears in \Cref{thm:main1}, was formulated in \cite{MR4830064}. We briefly recall this notion in \S\ref{sec:radial_limits}.

As a consequence of \Cref{thm:main1}, we obtain the following.

\begin{corollary}
\label{coro:main1dash}
Let $\mathcal{PH}^\infty(\teich_{g,m})$ denote the Banach space of bounded pluriharmonic functions on $\teich_{g,m}$, equipped with the supremum norm.
The radial limit map
\[
\mathcal{PH}^\infty(\teich_{g,m}) \ni u \mapsto u^* \in L^\infty(\partial\Bers{x_0})
\]
is a linear isometric injection. Moreover, its left inverse is given by the Poisson integral:
\[
L^\infty(\partial\Bers{x_0}) \ni g \mapsto \mathbb{P}(g) \in C^1_b(\teich_{g,m}),
\]
where $C^1_b(\teich_{g,m})$ denotes the space of bounded $C^1$ functions on $\teich_{g,m}$.

In particular, a bounded pluriharmonic function on $\teich_{g,m}$ is constant if and only if its radial limit is constant almost everywhere.
\end{corollary}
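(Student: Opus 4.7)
The plan is to deduce \Cref{coro:main1dash} from \Cref{thm:main1} essentially by formal manipulation, with the only substantive point being the $C^1$-regularity of the Poisson integral. Linearity of the radial-limit map $u\mapsto u^*$ is clear from its pointwise definition recalled in \S\ref{sec:radial_limits}. For injectivity, if $u^*=0$ almost everywhere with respect to $\pushThursMBers_{x_0}$, then formula~\eqref{eq:PI} together with the finiteness and positivity of the Poisson kernel $\mathbb{P}(x_0,x,\varphi)$ shows that $u^*=0$ almost everywhere with respect to $\pushThursMBers_x$ for every $x$, so \Cref{thm:main1} gives $u(x)=\mathbb{P}(u^*)(x)=0$. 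The identity $u=\mathbb{P}(u^*)$ furnished by \Cref{thm:main1} is itself the assertion that $\mathbb{P}$ is a left inverse of $u\mapsto u^*$.

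For the isometric statement, $\|u^*\|_\infty\le \|u\|_\infty$ is automatic since $u^*$ arises as radial limits of values of $u$. For the reverse inequality I intend to exploit that $\pushThursMBers_x$ is a probability measure on $\partial\Bers{x_0}$: applying \Cref{thm:main1} to the constant function $u\equiv 1$ (whose radial limit is identically $1$) gives $1=\mathbb{P}(1)(x)=\pushThursMBers_x(\partial\Bers{x_0})$. Substituting into~\eqref{eq:PI} and estimating the integrand by $\|u^*\|_\infty$ then yields $|u(x)|\le \|u^*\|_\infty$, hence $\|u\|_\infty\le \|u^*\|_\infty$.

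The step I expect to be the main obstacle is verifying that $\mathbb{P}(g)\in C^1_b(\teich_{g,m})$ for every $g\in L^\infty(\partial\Bers{x_0})$. Boundedness is immediate from the probability-measure property just noted. For $C^1$-regularity I would differentiate the second expression in~\eqref{eq:PI} under the integral sign: the $x$-dependence sits entirely in the Poisson kernel $\mathbb{P}(x_0,x,\varphi)$, which on $\partial^{\mathrm{ue}}\Bers{x_0}$ is a power of the ratio $\ext_{x_0}(\lambda_\varphi)/\ext_x(\lambda_\varphi)$. Because $\pushThursMBers_{x_0}$ is supported on uniquely ergodic laminations, for each such $\varphi$ the function $x\mapsto \ext_x(\lambda_\varphi)$ is smooth and its first variation is given by Gardiner's formula in terms of the Hubbard--Masur differential. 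The technical work is to produce locally uniform (in $x$) $L^1(\pushThursMBers_{x_0})$ bounds on $\|\nabla_x\mathbb{P}(x_0,x,\varphi)\|$ from the extremal-length and Thurston-measure estimates developed in \cite{MR4028456} and \cite{MR4633651}, after which dominated convergence will legitimate differentiation under the integral and yield continuity of the derivative.

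Finally, the characterization of constant pluriharmonic functions is immediate from the isometric injectivity just established: if $u^*\equiv c$ almost everywhere, then $u-c\in\mathcal{PH}^\infty(\teich_{g,m})$ has zero radial limit, so $u\equiv c$ by injectivity; the converse is trivial.
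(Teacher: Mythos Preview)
Your proposal is correct and follows essentially the same route as the paper: the paper in fact only writes out the $C^1_b$ check (treating linearity, injectivity, and isometry as immediate from \Cref{thm:main1} and the probability-measure property), and handles regularity exactly as you outline, by rewriting $\mathbb{P}(g)(x)$ via~\eqref{eq:change-base-points} and invoking the $C^1$-regularity of extremal length together with Gardiner's formula. The locally uniform $L^1$ bounds you anticipate needing are in fact elementary here --- Kerckhoff's inequality $\ext(x_0,\lambda)/\ext(x,\lambda)\le e^{2d_T(x_0,x)}$ controls the kernel, and $\|q_{\lambda,x}\|=\ext_x(\lambda)$ makes the Gardiner term in the derivative uniformly bounded --- so no appeal to the deeper estimates of \cite{MR4028456} or \cite{MR4633651} is required.
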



\subsection{Value distributions of plurisubharmonic functions}
By applying the proof of \Cref{thm:main1}, we study the distribution of values of non-negative bounded log-plurisubharmonic functions on $\teich_{g,m}$ via their boundary behavior. In particular, we establish an inequality of F. and M. Riesz type:

\begin{theorem}[F. and M. Riesz-type inequality]
\label{thm:main2}
Let $v$ be a non-negative plurisubharmonic function on $\teich_{g,m}$ such that $v \le M$ for some $M > 0$, and assume that $\log v$ is also plurisubharmonic.

Suppose there exists a measurable decomposition $\{E_k\}_{k\in I}$ of $\partial\Bers{x_0}$ and constants $0 < m_k \le M$ such that
\[
\limsup_{t \to \infty} v\left(\ray_\lambda^x(t)\right) \le m_k
\]
for all $k \in \mathbb{N}$ and $\lambda \in \mathcal{ML}$ with $\varphi_\lambda \in E_k$, where $\ray_\lambda^x \colon [0, \infty) \to \teich_{g,m}$ denotes the Teichmüller geodesic ray from $x \in \teich_{g,m}$ defined by the Hubbard-Masur differential for $\lambda$.
Then,
\[
v(x) \le 
\prod_{k \in I} m_k^{\pushThursMBers_x(E_k)}.
\]
\end{theorem}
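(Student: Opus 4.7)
The strategy is to apply the Jensen-type (sub-mean value) inequality --- the natural plurisubharmonic analogue of the Poisson formula of \Cref{thm:main1} --- to the plurisubharmonic function $\log v$ and then exponentiate. Since $\pushThursMBers_x$ is the pluriharmonic measure at $x$ in the sense of Demailly, it is in particular a Jensen measure, so one expects an inequality of the form
\[
w(x) \le \int_{\partial \Bers{x_0}} w^*(\varphi) \, d\pushThursMBers_x(\varphi)
\]
for any plurisubharmonic function $w$ on $\teich_{g,m}$ that is bounded above, where $w^*$ denotes the radial $\limsup$ of $w$ along Teichm\"uller rays.

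First I would dispose of trivial cases. If $v$ vanishes at some point, then plurisubharmonicity of $\log v$ together with connectedness of $\teich_{g,m}$ forces $v \equiv 0$, and the asserted inequality is immediate. Hence I may assume $v > 0$ everywhere, so that $\log v$ is a plurisubharmonic function with $\log v \le \log M$. Applying the Jensen-type inequality to $w = \log v$, I next analyse the boundary integrand. For any $\lambda \in \mathcal{ML}$ with $\varphi_\lambda \in E_k$, the hypothesis yields
\[
\limsup_{t \to \infty} \log v\bigl(\ray_\lambda^x(t)\bigr) \le \log m_k.
\]
Because $\pushThursMBers_x$ is supported on $\partial^{\mathrm{ue}} \Bers{x_0}$ and each such boundary point is determined by a unique ending lamination class $\varphi_\lambda$, this translates into $(\log v)^*(\varphi) \le \log m_k$ for $\pushThursMBers_x$-a.e.\ $\varphi \in E_k$. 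Splitting the integral over the decomposition $\{E_k\}_{k \in I}$ then gives
\[
\log v(x) \le \sum_{k \in I} (\log m_k)\, \pushThursMBers_x(E_k),
\]
and exponentiating produces the bound claimed in \Cref{thm:main2}.

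The principal obstacle is to establish the Jensen-type inequality above in a form adapted to radial Teichm\"uller-boundary limits --- that is, to upgrade \Cref{thm:main1} from bounded pluriharmonic $u$ (for which $u^*$ exists almost everywhere) to bounded-above plurisubharmonic $w$, where $w^*$ is only defined as a radial upper limit. The paper's phrase ``by applying the proof of \Cref{thm:main1}'' suggests that one reruns that argument with the equality $u(x) = \int u^* \, d\pushThursMBers_x$ replaced throughout by a one-sided estimate, the upper semi-continuity of $w$ together with Fatou's lemma applied at the step where radial limits are identified yielding the required inequality. A secondary technical point is the measurability of $(\log v)^*$ on $\partial \Bers{x_0}$ and its compatibility with the parameterisation $\lambda \mapsto \varphi_\lambda$ on the unique-ergodicity locus, which is precisely why the hypothesis is phrased in terms of Teichm\"uller rays from $x$ rather than from an arbitrary base point.
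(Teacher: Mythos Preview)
Your overall strategy---apply a sub-mean-value (Jensen) inequality to $\log v$ and exponentiate---is exactly what the paper does. The paper, however, does not invoke an abstract Jensen-measure property of $\pushThursMBers_x$; it makes the inequality concrete by first proving the disk-level lemma (\Cref{lem:subharmonic_maximum}: for bounded $v\ge 0$ on $\mathbb{D}$ with $\log v$ subharmonic, $v(0)\le \prod m_k^{\Theta(E_k)}$), then restricting $v$ to each Teichm\"uller disk $\Phi_{q_t}$, applying the lemma on each fibre of $\Pi^x\colon \mathcal{PML}\to\mathbb{P}\mathcal{ML}_x$, and finally integrating over $\mathbb{P}\mathcal{ML}_x$ via the disintegration identity of \Cref{prop:dis_inte_angle}. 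This is precisely the ``rerun the proof of \Cref{thm:main1} with one-sided estimates'' that you anticipate; your sketch is correct but less explicit about the fibrewise mechanism. One technical refinement in the paper worth noting: it truncates to finitely many $E_k$ (bounding the remainder by $\log M$) before applying the disintegration formula and only lets $s\to\infty$ at the end, which sidesteps integrability issues for $\sum_k \chi_{E'_k}\log m_k$.

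One small error: your claim that ``if $v$ vanishes at some point, then plurisubharmonicity of $\log v$ \dots\ forces $v\equiv 0$'' is false---a plurisubharmonic function may equal $-\infty$ on a pluripolar set without being identically $-\infty$ (think of $\log|z|$ on $\mathbb{C}$). This is harmless for the argument, since if $v(x)=0$ the asserted inequality is trivial, and the disk-level lemma does not require $v>0$ anyway.
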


A collection $\{E_k\}_{k \in I}$ of disjoint measurable subsets of a measure space $(X,\mu)$, indexed by a countable set $I$, is called a \emph{measurable decomposition} of $X$ if $X \setminus \bigcup_{k \in I} E_k$ is a null set.
As a consequence of \Cref{thm:main2}, we obtain the following result, which was also observed in \cite{MR4830064}:

\begin{corollary}[Identity theorem]
\label{coro:identity_theorem}
Let $f$ be a bounded holomorphic function on $\teich_{g,m}$. If its radial limit $f^*$ vanishes on a measurable subset of $\partial\Bers{x_0}$ with positive measure, then $f$ vanishes identically on $\teich_{g,m}$.
\end{corollary}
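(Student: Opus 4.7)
The plan is to reduce \Cref{coro:identity_theorem} to the F.~and~M.~Riesz-type inequality \Cref{thm:main2}, applied to the modulus $v = |f|$ of the bounded holomorphic function~$f$. First I would verify the hypotheses of \Cref{thm:main2}: as the modulus of a holomorphic function, $v$ is non-negative, plurisubharmonic, and satisfies $v \le M := \|f\|_\infty$, while $\log v = \log|f|$ is plurisubharmonic by the classical fact that logarithms of moduli of holomorphic functions are plurisubharmonic.

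Next, let $F \subset \partial\Bers{x_0}$ denote the given measurable set of positive $\pushThursMBers_{x_0}$-measure on which $f^*$ vanishes, and set $E_0 = F$ and $E_1 = \partial\Bers{x_0} \setminus F$. For each $\epsilon \in (0, M]$, the pair $\{E_0, E_1\}$ is a measurable decomposition compatible with the boundary bounds $m_0 = \epsilon$ and $m_1 = M$: on $E_0$ the radial limit of $|f|$ equals $|f^*| = 0 \le \epsilon$, and on $E_1$ one trivially has $|f| \le M$. Applying \Cref{thm:main2} would then yield
\[
|f(x)| \le \epsilon^{\pushThursMBers_x(E_0)}\, M^{\pushThursMBers_x(E_1)}
\]
for every $x \in \teich_{g,m}$. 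Sending $\epsilon \to 0^+$ forces $f(x) = 0$, \emph{provided} $\pushThursMBers_x(E_0) > 0$.

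The main obstacle is therefore verifying that $\pushThursMBers_x(E_0) > 0$ for \emph{every} $x \in \teich_{g,m}$, not only for the base point~$x_0$. This is precisely where the positivity of the Poisson kernel~\eqref{eq:Poisson-kernel} enters: the ratios of extremal lengths in $\mathbb{P}(x_0, x, \varphi)$ are strictly positive on $\partial^{\mathrm{ue}}\Bers{x_0}$, and the kernel equals $1$ elsewhere, so \eqref{eq:PI} exhibits the family $\{\pushThursMBers_x\}_{x\in\teich_{g,m}}$ as mutually absolutely continuous with strictly positive Radon--Nikodym derivatives. Consequently $\pushThursMBers_{x_0}(F) > 0$ transfers to $\pushThursMBers_x(F) > 0$ for every $x$, and the argument is complete. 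A minor supplementary check is that the radial limit of $|f|$ equals $|f^*|$ almost everywhere, which follows at once from the pointwise-along-rays definition of the radial limit recalled in \S\ref{sec:radial_limits}, applied to the real and imaginary parts of~$f$.
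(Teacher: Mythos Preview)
Your proposal is correct and is exactly the derivation the paper intends: the corollary is stated there simply as a consequence of \Cref{thm:main2}, with no further proof supplied, and you have filled in precisely those details (applying the inequality to $v=|f|$ with the two-set decomposition $E_0=F$, $E_1=\partial\Bers{x_0}\setminus F$, letting $m_0=\epsilon\to 0$, and using the strict positivity of the Poisson kernel via \eqref{eq:change-base-points} to propagate $\pushThursMBers_{x_0}(F)>0$ to $\pushThursMBers_x(F)>0$ for every $x$). The only cosmetic point is that \Cref{thm:main2} is phrased with the bound holding for \emph{all} $\lambda$ with $\varphi_\lambda\in E_k$, whereas the radial limit of $f$ is only guaranteed on the full-measure set $\mathcal{E}_0$; this is harmless, since you may shrink $E_0$ by the corresponding null set without affecting its measure.
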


\subsection{About the paper}
In \S\ref{sec:notation} and \S\ref{sec:Kleinian-surface-groups-and-the-Bers-slice}, we introduce the notations and conventions used throughout this paper.  
In \S\ref{sec:radial_limits}, we review the radial limit theorem and the disintegration method developed in \cite{MR4830064}.  
In \S\ref{sec:Proofs_theorems}, we prove \Cref{thm:main1}, \Cref{coro:main1dash}, and \Cref{thm:main2}.  
In \S\ref{sec:MVT-JM}, we explore the mean value theorem from the Teichmüller-theoretic point of view and interpret the Thurston measure as a Jensen measure.  
In the final section, \S\ref{sec:conclusion}, we discuss related open problems and directions for future research.
.

\subsection*{Acknowledgement}
The author would like to express his sincere gratitude to Professors Sudeb Mitra and Wang Zhe for the opportunity to participate in the wonderful conference in celebration of Professor Yunping Jiang's birthday and to present the author's research at the conference.
He also thanks Professor Ara Basmajian for fruitful discussions on applications of the Poisson integral formula. Additionally, he is grateful to Professor Toshiyuki Sugawa for valuable discussions on the value distribution of subharmonic functions.

The author further wishes to thank Professors Athanase Papadopoulos and Ken'ichi Ohshika for their warm encouragement and continuous support throughout his research.

\section{Notation}
\label{sec:notation}
Let $\Sigma_{g,m}$ denote a closed, orientable surface of genus $g$ with $m$ marked points, where $2g - 2 + m > 0$ (allowing $m = 0$).  
The (topological) complexity of $\Sigma_{g,m}$ is defined as  
$\convgenus = \convgenus(\Sigma_{g,m}) = 3g - 3 + m$.  
In this section, we review fundamental aspects of Teichmüller theory.  
For further details, the reader is referred to \cite{MR590044}, \cite{MR568308}, \cite{MR903027}, \cite{MR2245223}, \cite{MR1215481}, and \cite{MR927291}.

\subsection{Teichm\"uller space}
A \emph{marked Riemann surface} of type $(g,m)$  
is a pair $(M,f)$ consisting of a Riemann surface $M$ of analytically finite type $(g,m)$  
together with an orientation-preserving homeomorphism  
$f \colon \Sigma_{g,m} \to M$.  
The \emph{Teichm\"uller space} $\teich_{g,m}$ of Riemann surfaces of type $(g,m)$  
is defined as the set of Teichm\"uller equivalence classes of marked Riemann surfaces of type $(g,m)$, where two marked Riemann surfaces $(M_1, f_1)$ and $(M_2, f_2)$  
are said to be \emph{Teichm\"uller equivalent}  
if there exists a conformal mapping $h \colon M_1 \to M_2$  
such that $h \circ f_1$ is homotopic to $f_2$.

The \emph{Teichm\"uller distance} $d_T$ is a metric on $\teich_{g,m}$ defined by
$$
d_T(x_1, x_2) = \frac{1}{2} \log \inf_h K(h),
$$
for $x_i = (M_i, f_i)$ $(i=1,2)$,  
where the infimum is taken over all quasiconformal mappings  
$h \colon M_1 \to M_2$ homotopic to $f_2 \circ f_1^{-1}$,  
and where $K(h)$ denotes the maximal dilatation of $h$.  
It is known that $d_T$ is complete.

\subsection{Infinitesimal complex structure on $\teich_{g,m}$}
\label{subsec:infinitesimal-Teich}
For $x = (M,f) \in \teich_{g,m}$,  
we denote by $\mathcal{Q}_{x}$ the complex Banach space  
of holomorphic quadratic differentials $q = q(z)dz^{2}$ on $M$ satisfying
$$
\|q\| = \iint_{M} |q(z)| \frac{\sqrt{-1}}{2} dz \wedge d\overline{z} < \infty.
$$
By virtue of the Riemann-Roch theorem, we have $\dim_{\mathbb{C}} \mathcal{Q}_{x} = \convgenus$.  
The union $\mathcal{Q}_{g,m} = \bigcup_{x \in \teich_{g,m}} \mathcal{Q}_{x}$  
is identified with the holomorphic cotangent bundle of $\teich_{g,m}$  
via the pairing \eqref{eq:pairing-T-coT} defined later.


The Teichm\"uller space $\teich_{g,m}$ is a complex manifold of dimension
$\convgenus$.  
Its infinitesimal complex structure is described as follows.

Let $x=(M,f)\in \teich_{g,m}$.  
Let $L^{\infty}(M)$ be the Banach space of measurable $(-1,1)$-forms
$\mu=\mu(z) d\overline{z}/dz$ on $M$ equipped with the essential supremum norm
$$
\|\mu\|_{\infty} = {\rm ess.sup}_{p \in M} |\mu(p)| < \infty.
$$
Then the holomorphic tangent space $T_{x}(\teich_{g,m})$ of $\teich_{g,m}$ at $x$
is described as the quotient space
$$
L^{\infty}(M) / \left\{ \mu \in L^{\infty}(M) \mid
\llangle \mu, \varphi \rrangle = 0, \ \forall \varphi \in \mathcal{Q}_{x} \right\},
$$
where
\begin{equation*}
\llangle \mu, \varphi \rrangle =
\iint_{M} \mu \varphi = \iint_{M} \mu(z) \varphi(z) \frac{\sqrt{-1}}{2} dz \wedge d\overline{z}.
\end{equation*}
Any element of $L^\infty(M)$ is called an \emph{infinitesimal Beltrami differential} in this context.  
For $v = [\mu] \in T_x(\teich_{g,m})$
and $\varphi \in \mathcal{Q}_x$,
a \emph{canonical pairing} between $T_x(\teich_{g,m})$
and $\mathcal{Q}_x \cong T_x^*(\teich_{g,m})$ is defined by
\begin{equation}
\label{eq:pairing-T-coT}
\langle v, \varphi \rangle = \llangle \mu, \varphi \rrangle.
\end{equation}
%

\subsection{Measured laminations}
\label{subsec:ml}
Let $\mathcal{S}$ denote the set of homotopy classes of essential simple closed curves on $\Sigma_{g,m}$.  
Let $i(\alpha, \beta)$ denote the \emph{geometric intersection number} of simple closed curves $\alpha, \beta \in \mathcal{S}$.  

Fix a complete hyperbolic structure of finite area on $\Sigma_{g,m}$.  
A \emph{geodesic lamination} $L$ on $\Sigma_{g,m}$ is a non-empty closed subset that is a disjoint union of complete simple geodesics, where a geodesic is said to be \emph{complete} if it is either closed or has infinite length in both directions.  
The geodesics in $L$ are referred to as the \emph{leaves} of $L$.

A \emph{transverse measure} for a geodesic lamination $L$ assigns a Borel measure to each arc transverse to $L$, satisfying the following two conditions:  
(1) if an arc $k'$ is contained in a transverse arc $k$, then the measure assigned to $k'$ is the restriction of the measure assigned to $k$;  
(2) if two arcs $k$ and $k'$ are homotopic through a family of arcs transverse to $L$, then the homotopy transports the measure on $k$ to that on $k'$.  
A transverse measure on a geodesic lamination $L$ is said to have \emph{full support} if, for each transverse arc $k$, the support of the assigned measure is precisely $k \cap L$.

A \emph{measured lamination} $\lambda$ is a pair consisting of a geodesic lamination $|\lambda|$, called the \emph{support} of $\lambda$, together with a transverse measure on $|\lambda|$ of full support.  
Let $\mathcal{ML}$ denote the \emph{space of measured laminations} on $\Sigma_{g,m}$ with respect to the fixed hyperbolic structure.  
A weighted simple closed curve $t\alpha$ is naturally identified with a measured lamination whose support is a simple closed geodesic homotopic to $\alpha$, and whose transverse measure is given by $t$ times the Dirac measure supported at the intersections with transverse arcs.  
The positive real numbers $\mathbb{R}_+$ act on $\mathcal{ML}$ by multiplication. The quotient space $\mathcal{PML} = (\mathcal{ML} \setminus \{0\})/\mathbb{R}_+$ is called the \emph{space of projective measured laminations}.

A sequence $\{\lambda_n\}_n \subset \mathcal{ML}$ is said to converge to a measured lamination $\lambda \in \mathcal{ML}$ if, for any generic arc $k$, the transverse measures on $k$ assigned by $\lambda_n$ converge weakly to the transverse measure assigned by $\lambda$.  
Thurston showed that $\mathcal{ML}$ is homeomorphic to $\mathbb{R}^{2\convgenus}$, and $\mathcal{PML}$ is homeomorphic to $\mathbb{S}^{2\convgenus - 1}$.

The geometric intersection number  
\begin{equation}
\label{eq:intersection-number-WS}
i(t\alpha, s\beta) = ts\, i(\alpha, \beta)
\end{equation}
for weighted simple closed curves extends continuously to a bilinear function on $\mathcal{ML} \times \mathcal{ML}$.

A measured lamination $\lambda \in \mathcal{ML}$ is said to be \emph{minimal} if every leaf of $|\lambda|$ is dense in $|\lambda|$ (with respect to the topology induced from $\Sigma_{g,m}$).  
It is called \emph{filling} if every complementary region of $|\lambda|$ is either an ideal polygon or a once-punctured ideal polygon; equivalently, if $i(\lambda, \alpha) > 0$ for all $\alpha \in \mathcal{S}$.  
In this paper, a measured lamination $\lambda$ is said to be \emph{uniquely ergodic} if it is both minimal and filling, and if $\lambda' \in \mathcal{ML}$ satisfies $i(\lambda, \lambda') = 0$, then $\lambda' = t \lambda$ for some $t \ge 0$.


\subsection{Hubbard-Masur differentials and Extremal length}
\label{subsec:Hubbard-masur-theorem}
For $x = (M, f) \in \teich_{g,m}$ and $q \in \mathcal{Q}_{x}$,  
we define the \emph{vertical lamination} $v(q) \in \mathcal{ML}$  
of $q = q(z)dz^2$ by  
$$  
i(v(q), \alpha) = \inf_{\alpha' \in f(\alpha)} \int_{\alpha'} |{\rm Re}(\sqrt{q(z)}\,dz)|  
\quad (\alpha \in \mathcal{S}).  
$$  
We call $h(q) = v(-q)$ the \emph{horizontal lamination} of $q$.  
Hubbard and Masur \cite{MR523212} proved that the mapping  
\begin{equation}
\label{eq:Hubbard-Masur-homeo}
\mathcal{Q}_{x} \ni q \mapsto v(q) \in \mathcal{ML}
\end{equation}
is a homeomorphism for all $x \in \teich_{g,m}$.  
From \eqref{eq:Hubbard-Masur-homeo},  
for any $x \in \teich_{g,m}$ and $\lambda \in \mathcal{ML}$,  
there exists a unique $q_{\lambda,x} \in \mathcal{Q}_{x}$  
such that $v(q_{\lambda,x}) = \lambda$.  
We call $q_{\lambda,x}$ the \emph{Hubbard-Masur differential}  
for $\lambda$ on $x$.  
By definition, $q_{t\lambda,x} = t^2 q_{\lambda,x}$ for $t \ge 0$.

The \emph{extremal length} of $\lambda \in \mathcal{ML}$ on $x = (M, f) \in \teich_{g,m}$ is defined by  
$$  
\ext_{x}(\lambda) = \ext_{\lambda}(x) = \ext(x, \lambda) = \|q_{\lambda,x}\|,  
$$  
where we use the second symbol (resp. the third symbol) when we wish to emphasize that the extremal length is viewed as a function on $\teich_{g,m}$ (resp. a function of two variables on $\teich_{g,m} \times \mathcal{ML}$).

\subsection{Teichm\"uller rays and Teichm\"uller disks}
For $[\lambda] \in \mathcal{PML}$ and $x = (M, f) \in \teich_{g,m}$,  
the \emph{Teichm\"uller (geodesic) ray} $\ray_\lambda^x \colon [0, \infty) \to \teich_{g,m}$  
for $[\lambda]$ emanating from $x$ is defined as follows:  
for $t \ge 0$, let $h_t \colon M \to h_t(M)$ be the quasiconformal mapping with Beltrami differential  
$\tanh(t)\,|q_{\lambda,x}|/q_{\lambda,x}$.  
We set $\ray_\lambda^x(t) = (h_t(M), h_t \circ f)$.

Let $q \in \mathcal{Q}_x \setminus \{0\}$.  
For $|\lambda| < 1$, let $f_\lambda \colon M \to M_\lambda$ be the quasiconformal mapping with complex dilatation $\lambda\,\overline{q}/|q|$.  
Then,
$$
\Phi_q \colon (\mathbb{D}, d_{\mathbb{D}}) \to (M_\lambda, f_\lambda \circ f) \in (\teich_{g,m}, d_T)
$$
is a holomorphic and isometric mapping.  
We call $\Phi_q$ the \emph{Teichm\"uller disk} associated to the holomorphic quadratic differential $q = q(z)\,dz^2$.  
From the definition, we have
\begin{equation}
\label{eq:Teichmuller-rays-disks}
\ray_{v(e^{-i\theta}q)}^x(t) = \Phi_{e^{-i\theta}q}(\tanh(t)) = \Phi_q(\tanh(t)e^{i\theta})
\end{equation}
for $t \ge 0$.

\subsection{Thurston measure}
\label{subsec:Thurston-measure}
The \emph{Thurston measure} $\ThursM$ on $\mathcal{ML}$ is the unique, locally finite, mapping class group-invariant, ergodic measure supported on the set of filling measured laminations (cf. \cite{MR2424174}; see also \cite{MR1144770} and \cite{MR2415399}).  

Let $x \in \teich_{g,m}$.  
Define  
$\mathcal{BML}_x = \{\lambda \in \mathcal{ML} \mid \ext_x(\lambda) \le 1\}$.  
We then define the \emph{unit sphere with respect to the extremal length function} as  
$$
\mathcal{SML}_x = \partial \mathcal{BML}_x = \{\lambda \in \mathcal{ML} \mid \ext_x(\lambda) = 1\}.
$$
The natural projection $\mathcal{ML} \setminus \{0\} \to \mathcal{PML}$  
induces a homeomorphism  
$$
\proje_x \colon \mathcal{SML}_x \to \mathcal{PML}.
$$

Let
\begin{align*}
\mathcal{SML}_x^{\mathrm{mf}} &= \{\eta \in \mathcal{SML}_x \mid \text{$\eta$ is minimal and filling}\}, \\
\mathcal{SML}_x^{\mathrm{ue}} &= \{\eta \in \mathcal{SML}_x^{\mathrm{mf}} \mid \text{$\eta$ is uniquely ergodic}\},
\end{align*}
and define $\mathcal{PML}^{\mathrm{mf}} = \proje_x(\mathcal{SML}_x^{\mathrm{mf}})$ and $\mathcal{PML}^{\mathrm{ue}} = \proje_x(\mathcal{SML}_x^{\mathrm{ue}})$.

We define a probability measure $\PThursM^x$ on $\mathcal{PML}$ via the cone construction:
\begin{equation}
\label{eq:Thurston-measure-SML}
\PThursM^x(E) =
\frac{
\ThursM(\{tG \mid G \in \proje_x^{-1}(E),\ 0 \le t \le 1\})}{\ThursM(\mathcal{BML}_x)}
\end{equation}
for any Borel set $E \subset \mathcal{PML}$.  

In this paper, we also refer to $\PThursM^x$ as the \emph{Thurston measure associated to} $x \in \teich_{g,m}$.  
It is known that
\begin{equation}
\label{eq:change-base-points}
\PThursM^x(E) =
\int_E \left(\frac{\ext(y, \lambda)}{\ext(x, \lambda)}\right)^{\convgenus}
d\PThursM^y([\lambda])
\end{equation}
for any measurable set $E \subset \mathcal{PML}$ and any $x, y \in \teich_{g,m}$  
(cf. \cite[\S2.3.1]{MR2913101}).

\section{The Bers slice}
\label{sec:Kleinian-surface-groups-and-the-Bers-slice}

\subsection{Bers slices}
\label{subsec:Bers-slice}
Fix $x_{0} = (M_{0}, f_{0}) \in \teich_{g,m}$,  
and let $\Gamma_{0}$ be the marked Fuchsian group acting on $\mathbb{H}$  
that uniformizes $M_{0}$ with the marking $\pi_1(\Sigma_{g,m}) \cong \Gamma_0$  
induced by $f_0$.
Let $A_{2}(\mathbb{H}^{*}, \Gamma_{0})$ denote the Banach space of automorphic forms  
on $\mathbb{H}^{*} = \hat{\mathbb{C}} \setminus \overline{\mathbb{H}}$  
of weight $-4$, equipped with the hyperbolic supremum norm  
$$
\|\varphi\|_\infty = \sup_{z \in \mathbb{H}^*} 4\, \mathrm{Im}(z)^2 |\varphi(z)|.
$$
For each $\varphi \in A_{2}(\mathbb{H}^{*}, \Gamma_{0})$,  
we can define a locally univalent meromorphic map $W_{\varphi}$ on $\mathbb{H}^{*}$  
and a monodromy homomorphism  
$\rho_{\varphi} \colon \Gamma_{0} \to \mathrm{PSL}_{2}(\mathbb{C})$  
such that the Schwarzian derivative of $W_{\varphi}$ is equal to $\varphi$,  
and $\rho_{\varphi}(\gamma) \circ W_{\varphi} = W_{\varphi} \circ \gamma$  
for all $\gamma \in \Gamma_{0}$.  
Let $\Gamma_{\varphi} = \rho_{\varphi}(\Gamma_{0})$.

The \emph{Bers slice} $\Bers{x_{0}}$ with basepoint $x_{0} \in \teich_{g,m}$  
is a domain in $A_{2}(\mathbb{H}^{*}, \Gamma_{0})$  
consisting of all $\varphi \in A_{2}(\mathbb{H}^{*}, \Gamma_{0})$  
such that $W_{\varphi}$ admits a quasiconformal extension to $\hat{\mathbb{C}}$.
The Bers slice $\Bers{x_{0}}$ is bounded and biholomorphically identified with $\teich_{g,m}$.  
Indeed, any $x \in \teich_{g,m}$ corresponds to some $\varphi$ for which $\Gamma_{\varphi}$  
is the marked quasifuchsian group uniformizing both $x_{0}$ and $x$  
(cf. \cite{MR0130972}).

The closure $\overline{\Bers{x_{0}}}$ of $\Bers{x_{0}}$ in $A_{2}(\mathbb{H}^{*}, \Gamma_{0})$  
is called the \emph{Bers compactification} of $\teich_{g,m}$.  
The boundary $\partial\Bers{x_{0}}$ is called the \emph{Bers boundary}.
For $\varphi \in \overline{\Bers{x_{0}}}$,  
the group $\Gamma_\varphi$ is a Kleinian surface group,  
with an isomorphism  
$\rho_\varphi \colon \pi_1(\Sigma_{g,m}) \cong \Gamma_0 \to \Gamma_\varphi$.

\subsection{Boundary groups without accidental parabolics}
\label{subsec:BoundarygroupswithoutAPT}
A boundary point $\varphi \in \partial \Bers{x_0}$ is called a \emph{cusp} if  
there exists a non-parabolic element $\gamma \in \Gamma_0$  
such that $\rho_{\varphi}(\gamma)$ is parabolic  
(cf. \cite[\S1]{MR0297992}).  
Such an element $\gamma$ or its image $\rho_{\varphi}(\gamma)$  
is called an \emph{accidental parabolic transformation}  
of $\varphi$ or $\Gamma_{\varphi}$  
(cf. \cite[\S4]{MR0297992}).
Let $\partial^{\mathrm{cusp}} \Bers{x_0}$ be the set of cusps in $\partial \Bers{x_0}$,  
and define  
$$
\partial^{\mathrm{mf}} \Bers{x_0} = \partial \Bers{x_0} \setminus \partial^{\mathrm{cusp}} \Bers{x_0}.
$$

Let $x_0 \in \teich_{g,m}$.  
Let $\mathcal{PML}^{\mathrm{mf}}$ denote the set of projective classes  
of minimal and filling measured laminations.  
By virtue of the Ending Lamination Theorem \cite{MR2925381}  
and the Double Limit Theorem of Thurston \cite{thurston1998hyperbolicstructures3manifoldsii},  
there exists a continuous, closed, and surjective map  
\begin{equation}
\label{eq:curve-complex-mininal-bers}
\Xi_{x_0} \colon \mathcal{PML}^{\mathrm{mf}} \to \partial^{\mathrm{mf}} \Bers{x_0}
\end{equation}
which assigns to each $[\lambda] \in \mathcal{PML}^{\mathrm{mf}}$  
the boundary group whose ending lamination is $|\lambda|$  
(see also \cite{MR2258749} and \cite{MR2582104}).  
For the definition of the ending lamination, see  
\cite{MR2630036}, \cite{MR2827011}, and \cite{Thuston-LectureNote}.

For simplicity, we write  
$$
\varphi_\lambda = \varphi_{\lambda, x_0} := \Xi_{x_0}([\lambda])
$$
for $[\lambda] \in \mathcal{PML}^{\mathrm{mf}}$.

The set $\mathcal{PML}^{\mathrm{mf}}$ contains  
a subset $\mathcal{PML}^{\mathrm{ue}}$ consisting of  
uniquely ergodic measured laminations.  
Let $\partial^{\mathrm{ue}} \Bers{x_0}$ denote the image of  
$\mathcal{PML}^{\mathrm{ue}}$ under the map $\Xi_{x_0}$.

The following proposition is well-known among experts.  
Indeed, it follows from the continuity of length functions  
and the Ending Lamination Theorem (cf. \cite{MR1791139}, \cite{MR2925381}, and \cite{MR1029395}).  
For a detailed proof, see \cite[Proposition 2.1]{MR4830064}.

\begin{proposition}
\label{prop:Teichmuller-limit}
Let $x_0 \in \teich_{g,m}$.  
For any $x \in \teich_{g,m}$ and $[\lambda] \in \mathcal{PML}^{\mathrm{mf}}$,  
the Teichmüller ray $\ray_\lambda^x$ converges to a totally degenerate  
group without accidental parabolics in $\partial^{\mathrm{mf}} \Bers{x_0}$,  
whose ending lamination is $|\lambda|$.
\end{proposition}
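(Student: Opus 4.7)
The plan is to combine compactness of the Bers compactification, the standard length-function asymptotics along Teichmüller rays, and the Ending Lamination Theorem. First I would let $\varphi_t\in\Bers{x_0}$ be the point corresponding to the quasifuchsian group uniformizing $x_0$ and $\ray_\lambda^x(t)$. Since $A_2(\mathbb{H}^*,\Gamma_0)$ has finite complex dimension $\convgenus$ and $\Bers{x_0}$ is bounded therein, $\overline{\Bers{x_0}}$ is compact; hence any sequence $t_n\to\infty$ admits a subsequence along which $\varphi_{t_n}$ converges to some $\varphi_\infty$. This limit lies in $\partial\Bers{x_0}$ because $\ray_\lambda^x$ is a proper ray in $\teich_{g,m}\cong\Bers{x_0}$. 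The goal is then to show every such $\varphi_\infty$ equals $\Xi_{x_0}([\lambda])$.

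Next I would extract the key length asymptotics along the ray. By the definition of the Teichmüller deformation in the direction of $q_{\lambda,x}$, the vertical natural coordinate is compressed by $e^{-t}$, so
\[
\ext_{\ray_\lambda^x(t)}(\lambda) = e^{-2t}\ext_x(\lambda),
\]
while Minsky's inequality $i(\mu,\nu)^2\le \ext(\mu)\ext(\nu)$ forces $\ext_{\ray_\lambda^x(t)}(\mu)\to\infty$ whenever $i(\mu,\lambda)>0$. Approximating $\lambda$ by weighted simple closed curves $c_n\alpha_n\to\lambda$ in $\mathcal{ML}$ and invoking Maskit's comparison of extremal and hyperbolic length in the short-curve regime, a diagonal choice of $\alpha_n$ has hyperbolic length on $\ray_\lambda^x(t_n)$ tending to zero. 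Bers' inequality $\ell_{\Gamma_{\varphi_t}}(\alpha)\le 2\,\ell_{\ray_\lambda^x(t)}(\alpha)$ transports this into the quasifuchsian groups, so $\alpha_n$ becomes short in $\Gamma_{\varphi_{t_n}}$ and therefore exits the top geometric end of the algebraic limit $\Gamma_{\varphi_\infty}$. Minimality of $|\lambda|$ then identifies the top ending lamination with $|\lambda|$. To rule out accidental parabolics I would invoke the filling hypothesis: every $\gamma\in\mathcal{S}$ has $i(\gamma,\lambda)>0$, and the parabolic locus of a Kleinian surface group with degenerate end must be disjoint from the ending lamination of that end, so no such $\gamma$ can be accidentally parabolic in $\Gamma_{\varphi_\infty}$.

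Finally, the Ending Lamination Theorem of Brock-Canary-Minsky pins down a Kleinian surface group in $\partial^{\mathrm{mf}}\Bers{x_0}$ by the conformal structure $x_0$ on its geometrically finite end together with the ending lamination on its degenerate end. Thus every subsequential limit of $\varphi_t$ coincides with $\Xi_{x_0}([\lambda])$, and the full ray converges to $\Xi_{x_0}([\lambda])$. I expect the main obstacle to lie in the middle step: passing from extremal-length blow-up on $\teich_{g,m}$, through Maskit's comparison and Bers' inequality, to genuine hyperbolic-length control in the three-manifolds $\mathbb{H}^3/\Gamma_{\varphi_t}$, and then combining this with the geometric-limit theory to both identify the ending lamination and exclude accidental parabolics. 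The minimal-filling hypothesis on $[\lambda]$ is the decisive input that makes this transfer go through; in effect, the argument plays the role of a Bers-slice variant of Thurston's Double Limit Theorem with the second boundary datum held fixed at $x_0$.
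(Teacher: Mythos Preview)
The paper does not supply its own proof of this proposition: it declares the result well-known, notes that it ``follows from the continuity of length functions and the Ending Lamination Theorem,'' and refers to \cite[Proposition 2.1]{MR4830064} for details. Your outline is consistent with that sketch and is essentially correct: compactness of $\overline{\Bers{x_0}}$ gives subsequential limits, the extremal-length decay $\ext_{\ray_\lambda^x(t)}(\lambda)=e^{-2t}\ext_x(\lambda)$ combined with Maskit's comparison and Bers' inequality forces the length of $\lambda$ to vanish in the limit group, the filling hypothesis rules out accidental parabolics, and the Ending Lamination Theorem pins down the limit uniquely as $\Xi_{x_0}([\lambda])$.

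The only place where your route differs from what the paper's citations point to is the middle step. You pass through a diagonal choice of approximating simple closed curves $\alpha_n\to\lambda$ and argue they become short in $\Gamma_{\varphi_{t_n}}$. The reference to ``continuity of length functions'' (Brock \cite{MR1791139}) instead works directly with the length function $\ell_\lambda$ of the \emph{lamination} $\lambda$, which extends continuously to $\overline{\Bers{x_0}}$; one then reads off $\ell_\lambda(\varphi_\infty)=0$ in one stroke and invokes the characterization of zero-length laminations in boundary groups. That packaging avoids the diagonal extraction and the delicate passage from ``$\alpha_n$ short in $\Gamma_{\varphi_{t_n}}$'' to ``$\alpha_n$ exits the end of the algebraic limit,'' which in your version still implicitly uses some geometric-limit control. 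Both approaches land in the same place, but citing Brock's continuity theorem is the cleaner black box and is what the paper has in mind.
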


\subsection{Push-forward measures on the Bers boundaries}
\label{subsec:pushforwardmeasure-Bers-boundary}
For $x \in \teich_{g,m}$,  
we define a probability measure $\pushThursMBers_x$ on $\partial \Bers{x_0}$  
as the pushforward of the Thurston measure $\PThursM^x$ via the map $\Xi_{x_0} \colon \mathcal{PML}^{\mathrm{mf}} \to \partial \Bers{x_0}$:
\begin{equation}
\label{eq:defintion-pushforward-Th}
\int_{\partial \Bers{x_0}} f\, d\pushThursMBers_x
= \int_{\mathcal{PML}^{\mathrm{mf}}} f \circ \Xi_{x_0} \, d\PThursM^x
\end{equation}
for continuous functions $f$ on $\partial \Bers{x_0}$.
The superscript ``B" stands for the initial letter of the family name of Lipman Bers.

Masur \cite{MR644018} showed that $\mathcal{PML}^{\mathrm{ue}}$ has full measure in $\mathcal{PML}$  
with respect to $\PThursM^x$ for any $x \in \teich_{g,m}$.  
Hence, the composition $f \circ \Xi_{x_0}$ is defined almost everywhere on $\mathcal{PML}$.
Masur's result also implies that $\partial^{\mathrm{ue}} \Bers{x_0}$ is a set of full measure in $\partial \Bers{x_0}$  
with respect to the pushforward measure $\pushThursMBers_x$.

The author \cite{MR4633651} showed that the pushforward measure $\pushThursMBers_x$ coincides with the pluriharmonic measure on $\teich_{g,m}$  
with pole at $x \in \teich_{g,m}$ in the sense of Demailly,  
and that the complement $\partial \Bers{x_0} \setminus \partial^{\mathrm{mf}} \Bers{x_0}$ is null with respect to the pluriharmonic measure.

\section{Radial limits}
\label{sec:radial_limits}

\subsection{Radial limit theorem}
In \cite{MR4830064}, we observed the following:

\begin{proposition}[Radial limit theorem]
\label{prop:radial_limit_theorem}
For a bounded pluriharmonic function $u$ on $\teich_{g,m}$, there exists a full-measure set $\mathcal{E}_0 = \mathcal{E}_0(u) \subset \mathcal{PMF}$, depending only on $u$, with respect to the Thurston measure associated to some (and hence any) point $x \in \teich_{g,m}$, satisfying the following properties:
\begin{enumerate}
\item 
each element of $\mathcal{E}_0$ is minimal, filling, and uniquely ergodic;
\item
the radial limit $\lim_{t \to \infty} u(\ray_\lambda^x(t))$ exists for all 
$x \in \teich_{g,m}$ and $[\lambda] \in \mathcal{E}_0$; and
\item
the radial limit is independent of the choice of base point; namely,
$$
\lim_{t \to \infty} u(\ray_\lambda^{x_1}(t)) = \lim_{t \to \infty} u(\ray_\lambda^{x_2}(t))
$$
for all $[\lambda] \in \mathcal{E}_0$ and $x_1, x_2 \in \teich_{g,m}$.
\end{enumerate}
\end{proposition}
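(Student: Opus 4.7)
The plan is to reduce the radial-limit statement on $\teich_{g,m}$ to the classical Fatou theorem applied to restrictions of $u$ to Teichm\"uller disks, and then to upgrade the resulting circlewise almost-everywhere statement to a genuine almost-everywhere statement on $\mathcal{PML}$ with respect to the Thurston measure via a disintegration argument. For base-point independence, I would exploit the asymptotic behaviour of Teichm\"uller rays in uniquely ergodic directions.

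First I fix $x \in \teich_{g,m}$ and identify $\mathcal{Q}_x \cong \mathbb{C}^{\convgenus}$ with $\mathcal{ML}$ through the Hubbard--Masur homeomorphism $q \mapsto v(q)$. For each $q$ on the unit sphere $S(\mathcal{Q}_x) = \{q \in \mathcal{Q}_x : \|q\| = 1\}$, the Teichm\"uller disk $\Phi_q \colon (\mathbb{D}, d_{\mathbb{D}}) \to (\teich_{g,m}, d_T)$ is holomorphic and isometric, so $u \circ \Phi_q$ is a bounded harmonic function on the hyperbolic disk. The classical Fatou theorem produces, for each such $q$, a Lebesgue-full set $N_q \subset S^1$ on which the radial limit $\lim_{r \to 1^-}(u \circ \Phi_q)(re^{i\theta})$ exists; by \eqref{eq:Teichmuller-rays-disks} this equals $\lim_{t \to \infty} u(\ray_{v(e^{-i\theta}q)}^x(t))$.

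The next step is to upgrade this pointwise a.e.\ statement to an a.e.\ statement for $\PThursM^x$. The key input, central to the disintegration method of \cite{MR4830064}, is that the Thurston measure on $\mathcal{ML}$ coincides, up to a positive constant, with the pushforward of Lebesgue measure on $\mathcal{Q}_x \cong \mathbb{C}^{\convgenus}$ under $q \mapsto v(q)$. Writing this Lebesgue measure in polar coordinates on $\mathcal{Q}_x$ and then decomposing along the $S^1$-action $q \mapsto e^{-i\theta}q$ on the unit sphere, one obtains a disintegration supported, up to the origin, on $S(\mathcal{Q}_x) \times S^1$. The set of pairs $(q, e^{i\theta})$ for which the radial limit fails is Borel, and by Fubini it is null for this product; its image in $\mathcal{PML}$ is therefore $\PThursM^x$-null. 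Intersecting the complement with $\mathcal{PML}^{\mathrm{ue}}$, which has full measure by Masur's result recalled in \S\ref{subsec:pushforwardmeasure-Bers-boundary}, defines $\mathcal{E}_0$. The change-of-basepoint relation \eqref{eq:change-base-points} shows that the Thurston measures at different base points are mutually absolutely continuous with strictly positive Radon--Nikodym derivatives, so $\mathcal{E}_0$ is simultaneously of full measure for every $\PThursM^y$; this yields (1) and the existence clause of (2) at the fixed base point $x$.

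To extend (2) to all base points and to prove (3), I would use that, for $[\lambda] \in \mathcal{PML}^{\mathrm{ue}}$, the rays $\ray_\lambda^{x_1}$ and $\ray_\lambda^{x_2}$ from any two base points converge to the same point $\varphi_\lambda \in \partial^{\mathrm{ue}}\Bers{x_0}$ by \Cref{prop:Teichmuller-limit} and in fact become asymptotic in $d_T$ by a theorem of Masur. One then transports the two rays into a single Teichm\"uller disk in which they approach the same boundary point within a Stolz-type non-tangential cone, and invokes the non-tangential form of Fatou's theorem for $u$ restricted to that disk to conclude that the two radial limits agree. The main obstacle, I expect, is precisely this last step: one has to upgrade the pointwise Fatou phenomenon, which is naturally tied to one Teichm\"uller disk at a time, to a comparison of radial limits along asymptotic rays that a priori live in different disks. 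This is where the uniquely ergodic hypothesis is essential, since it supplies the strong asymptoticity (controlled, up to a further null set, uniformly over base points chosen from a countable dense subfamily) needed to realize both rays inside a common non-tangential cone.
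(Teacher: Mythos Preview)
The paper does not prove \Cref{prop:radial_limit_theorem} here; it is quoted from \cite{MR4830064}, and only the disintegration apparatus behind it (the $\mathbb{S}^1$-action $A_x$ on $\mathcal{PML}$, the quotient $\mathbb{P}\mathcal{ML}_x$, and \Cref{prop:dis_inte_angle}) is recapitulated in \S\ref{sec:radial_limits}. Your plan for part~(1) and for the existence half of~(2) at the fixed base point --- Fatou on each Teichm\"uller disk, Dumas' $\mathbb{S}^1$-invariance of $\PThursM^x$, disintegration along the $\mathbb{S}^1$-orbits, and Fubini --- is exactly the mechanism summarised in \S4.2, so on those points your proposal is on target.

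The step that is not yet right is your mechanism for~(2) at a \emph{second} base point and for~(3). Two Teichm\"uller rays $\ray_\lambda^{x_1}$ and $\ray_\lambda^{x_2}$ with the same uniquely ergodic direction do \emph{not} in general lie in a common Teichm\"uller disk, so there is no single disk into which both can be ``transported'' and no non-tangential Fatou theorem to invoke there. What actually closes the argument is more elementary than a non-tangential upgrade: a bounded pluriharmonic $u$ is uniformly Lipschitz for $d_T$, because any two points lie on a Teichm\"uller segment contained in \emph{some} disk, and on that disk the standard gradient estimate for bounded harmonic functions gives $|u(p)-u(q)|\le C\|u\|_\infty\, d_T(p,q)$. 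Combined with Masur's theorem that for $[\lambda]\in\mathcal{PML}^{\mathrm{ue}}$ the rays $\ray_\lambda^{x_1}$ and $\ray_\lambda^{x_2}$ are strongly asymptotic (their $d_T$-distance tends to~$0$ after a time shift), this immediately forces existence and equality of the two radial limits, with no further null set to excise and no countable-dense-subfamily bookkeeping. Replace your ``transport into a single disk'' paragraph with this Lipschitz-plus-Masur argument and the proof goes through.
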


We define a bounded measurable function on $\partial \Bers{x_0}$ by
$$
u^*(\varphi_\lambda) = \begin{cases}
{\displaystyle \lim_{t \to \infty} u(\ray_\lambda^x(t))} & ([\lambda] \in \mathcal{E}_0), \\
0 & ([\lambda] \in \mathcal{PML} \setminus \mathcal{E}_0),
\end{cases}
$$
for a bounded pluriharmonic function $u$ on $\teich_{g,m}$, where
$\varphi_\lambda \in \partial \Bers{x_0}$ is the boundary group with the ending lamination $|\lambda|$.
We call $u^*$ the \emph{radial limit} of $u$ on $\partial \Bers{x_0}$.
By \eqref{prop:Teichmuller-limit}, $u^* \circ \Xi_{x_0} \in L^\infty(\mathcal{PML})$ is also regarded as the radial limit of $u$.

\subsection{Projectivization of $\mathcal{PML}$}
Let $x = (M,f) \in \teich_{g,m}$. We define an action $A_x$ of $\mathbb{S}^1$ on $\mathcal{PML}$ by
$$
A_x \colon \mathbb{S}^1 \times \mathcal{PML} \ni (e^{i\alpha}, [\lambda]) \mapsto [v(e^{-i\alpha} q_{\lambda,x})] \in \mathcal{PML}.
$$
Let $\mathbb{P}\mathcal{ML}_x$ denote the quotient space $\mathcal{PML} / \mathbb{S}^1 \cong \mathbb{CP}^{\convgenus - 1}$, and let $\Pi^x \colon \mathcal{PML} \to \mathbb{P}\mathcal{ML}_x$ be the natural projection.

The disintegration theorem~\cite{MR1484954} asserts that there exists a unique family $\{\boldsymbol{m}_t\}_{t \in \mathbb{P}\mathcal{ML}_x}$ of probability measures on $\mathcal{PML}$ such that:
\begin{itemize}
\item
each $\boldsymbol{m}_t$ is supported on the fiber $(\Pi^x)^{-1}(t)$;
\item
for any non-negative measurable function $f$ on $\mathcal{PML}$, the function
$$
\mathbb{P}\mathcal{ML}_x \ni t \mapsto \int_{\mathcal{PML}} f([\lambda]) \, d\boldsymbol{m}_t([\lambda])
$$
is measurable;
\item
for any non-negative measurable function $f$ on $\mathcal{PML}$,
$$
\int_{\mathbb{P}\mathcal{ML}_x}
\int_{\mathcal{PML}} f([\lambda]) \, d\boldsymbol{m}_t([\lambda]) \, d\boldsymbol{\nu}^x(t)
= \int_{\mathcal{PML}} f([\lambda]) \, d\PThursM^x([\lambda]),
$$
where $\boldsymbol{\nu}^x = (\Pi^x)_*(\PThursM^x)$ denotes the pushforward measure.
\end{itemize}

Let $t \in \mathbb{P}\mathcal{ML}_x$ and choose $q_t \in \mathcal{q}_t$ such that $\Pi^x([v(q_t)]) = t$. Define the embedding
$$
\boldsymbol{\phi}_t \colon \mathbb{S}^1 \ni e^{i\theta} \mapsto [v(e^{-i\theta} q_t)] \in (\Pi^x)^{-1}(t).
$$
This embedding is equivariant with respect to the $\mathbb{S}^1$-actions on both $\mathbb{S}^1$ and the fiber $(\Pi^x)^{-1}(t)$; that is,
$$
\boldsymbol{\phi}_t(e^{i\alpha} e^{i\theta}) = [v(e^{-i(\alpha+\theta)} q_t)] 
= [v(e^{-i\alpha}(e^{-i\theta} q_t))] = A_x(e^{i\alpha}, \boldsymbol{\phi}_t(e^{i\theta})).
$$
Hence, the fiber $(\Pi^x)^{-1}(t)$ inherits the angle measure $\Theta_t$, defined as the pushforward of the normalized Lebesgue measure $d\theta / 2\pi$ on $\mathbb{S}^1$ via $\boldsymbol{\phi}_t$.

\begin{proposition}[{\cite[Proposition 3.2]{MR4830064}}]
\label{prop:dis_inte_angle}
$\boldsymbol{m}_t = \Theta_t$ for almost every $t \in \mathbb{P}\mathcal{ML}_x$.
\end{proposition}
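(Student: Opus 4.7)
The plan is to exploit the uniqueness clause in the disintegration theorem together with $\mathbb{S}^1$-invariance of the Thurston measure under the action $A_x$. The argument proceeds in three steps: (i) show that $\PThursM^x$ is $A_x$-invariant on $\mathcal{PML}$; (ii) use uniqueness of disintegration to conclude that $\boldsymbol{m}_t$ is $\mathbb{S}^1$-invariant on the fiber $(\Pi^x)^{-1}(t)$ for $\boldsymbol{\nu}^x$-almost every $t$; and (iii) invoke Haar uniqueness on $\mathbb{S}^1$ to identify $\boldsymbol{m}_t$ with $\Theta_t$.

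For step (i), the Hubbard-Masur homeomorphism \eqref{eq:Hubbard-Masur-homeo} intertwines $A_x$ on $\mathcal{PML}$ with the rotation $q \mapsto e^{-i\theta}q$ on $\mathcal{Q}_x \setminus \{0\}$. Since $\ext_x(\lambda) = \|q_{\lambda,x}\|$ depends only on $|q_{\lambda,x}|$, both $\mathcal{BML}_x$ and $\mathcal{SML}_x$ are $A_x$-invariant. The standard description of the Thurston measure via this correspondence---it pulls back to a rotation-invariant Lebesgue-type measure on $\mathcal{Q}_x$, reflecting the $SO(2)$-invariance of the Masur--Veech measure---then implies that the cone construction \eqref{eq:Thurston-measure-SML} produces an $A_x$-invariant probability measure $\PThursM^x$.

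For step (ii), fix $e^{i\alpha} \in \mathbb{S}^1$. The family $(A_x(e^{i\alpha},\cdot))_*\boldsymbol{m}_t$ is again a disintegration of $\PThursM^x$ along $\Pi^x$: each fiber is $A_x$-invariant, measurability is preserved under pushforward, and the required integral identity reduces to the $A_x$-invariance established in (i). Uniqueness yields a null set $N(\alpha) \subset \mathbb{P}\mathcal{ML}_x$ outside of which $(A_x(e^{i\alpha},\cdot))_*\boldsymbol{m}_t = \boldsymbol{m}_t$. Taking a countable dense sequence $\alpha_n \in [0, 2\pi)$ and $N = \bigcup_n N(\alpha_n)$, the stabilizer of $\boldsymbol{m}_t$ (closed in the weak-$*$ topology, since $e^{i\alpha} \mapsto (A_x(e^{i\alpha},\cdot))_* \boldsymbol{m}_t$ is weak-$*$ continuous) contains $e^{i\alpha_n}$ for all $n$ off the null set $N$, and therefore equals all of $\mathbb{S}^1$.

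For step (iii), $\boldsymbol{\phi}_t$ is $\mathbb{S}^1$-equivariant by construction and a homeomorphism onto $(\Pi^x)^{-1}(t)$ for $\boldsymbol{\nu}^x$-almost every $t$ (the exceptional directions whose Hubbard-Masur differential admits a nontrivial rotational symmetry form a negligible set). The pullback of $\boldsymbol{m}_t$ via $\boldsymbol{\phi}_t$ is then an $\mathbb{S}^1$-invariant probability measure on $\mathbb{S}^1$, which must equal the normalized Lebesgue measure $d\theta / 2\pi$ by Haar uniqueness; pushing forward yields $\boldsymbol{m}_t = \Theta_t$. The main obstacle is step (i): the $A_x$-invariance of $\PThursM^x$ must be extracted carefully from the interplay between the Hubbard-Masur correspondence and the canonical Lebesgue-type structure on $\mathcal{Q}_x$ underlying the Thurston measure, a point that is standard but requires some bookkeeping with the cone construction \eqref{eq:Thurston-measure-SML}.
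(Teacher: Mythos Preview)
Your overall strategy is correct and matches the approach the paper indicates: the paper does not reproduce the proof (it is quoted from \cite{MR4830064}) but explicitly remarks that ``the invariance of the Thurston measure under the action $A_x$, established by Dumas~\cite{MR3413977}, plays a crucial role.'' That invariance is exactly your step~(i), after which your steps~(ii) and~(iii) (uniqueness of disintegration plus Haar uniqueness on $\mathbb{S}^1$) are the standard way to finish.

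The weak point is your justification of step~(i). The $A_x$-invariance of $\PThursM^x$ is the genuine content of Dumas's theorem, not a routine consequence of the $SO(2)$-invariance of the Masur--Veech measure. The Masur--Veech measure lives on the total moduli space of quadratic differentials; while the $SO(2)$-action does preserve each fiber $\mathcal{Q}_x$, there is no cheap reason why the conditional Masur--Veech measure on that fiber should coincide (up to scale) with the pullback of the Thurston measure under the Hubbard--Masur map $v\colon\mathcal{Q}_x\to\mathcal{ML}$. The map $v$ is highly nonlinear, so rotation-invariance of the pullback of Thurston measure on $\mathcal{Q}_x$ does not follow from linear-algebraic symmetry alone. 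What Dumas proves is precisely that this pullback is rotation-invariant (equivalently, that the vertical and horizontal Hubbard--Masur pullbacks agree), and this is more than ``bookkeeping with the cone construction.'' You should cite Dumas for step~(i) rather than attempt to rederive it from the Masur--Veech side; with that citation in place, your proof is complete and agrees with the paper's.
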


In the proof of \Cref{prop:dis_inte_angle}, the invariance of the Thurston measure under the action $A_x$, established by Dumas~\cite{MR3413977}, plays a crucial role.

\section{Proofs}
\label{sec:Proofs_theorems}
\subsection{Proof of \Cref{thm:main1}}
Let $u$ be a bounded pluriharmonic function on $\teich_{g,m}$, and let $u^*$ denote its radial limit.
Define $U^* = u^* \circ \Xi_{x}$, which is a bounded measurable function on $\mathcal{PML}$. Then we have
\begin{align*}
\int_{\partial \Bers{x_0}} u^*(\varphi) \, d\pushThursMBers_x(\varphi)
&= \int_{\mathcal{PML}} U^*([\lambda]) \, d\PThursM^x([\lambda]) \\
&= \int_{\mathbb{P}\mathcal{ML}_x} \int_{\mathcal{PML}} U^*([\lambda]) \, d\Theta_t([\lambda]) \, d\boldsymbol{\nu}^x(t),
\end{align*}
by the disintegration theorem and \Cref{prop:dis_inte_angle}.

For $t \in \mathbb{P}\mathcal{ML}_x$, let $q_t \in \mathcal{Q}_x$ be the corresponding unit-norm quadratic differential.  
By \eqref{eq:Teichmuller-rays-disks}, \Cref{prop:Teichmuller-limit}, and \Cref{prop:dis_inte_angle}, we have
\[
U^*([v(e^{-i\theta} q_t)]) = u^*\!\left(\Xi_{x}([v(e^{-i\theta} q_t)])\right) = u^*\!\left(\varphi_{v(e^{-i\theta} q_t)}\right),
\]
which coincides with the radial limit
\[
(u \circ \Phi_{q_t})^*(e^{i\theta}) = \lim_{r \to 1} u \circ \Phi_{q_t}(r e^{i\theta})
\]
of the harmonic function $u \circ \Phi_{q_t}$ on $\mathbb{D}$, for almost every $e^{i\theta} \in \mathbb{S}^1$ and for almost every $t \in \mathbb{P}\mathcal{ML}_x$.
Since each $u \circ \Phi_{q_t}$ is bounded harmonic, by \cite[p.~38, Corollary]{MR1102893}, we have
\begin{align*}
\int_{\mathcal{PML}} U^*([\lambda]) \, d\Theta_t([\lambda])
&= \int_0^{2\pi} (u \circ \Phi_{q_t})^*(e^{i\theta}) \frac{d\theta}{2\pi} = u \circ \Phi_{q_t}(0) = u(x)
\end{align*}
for almost every $t \in \mathbb{P}\mathcal{ML}_x$.
Therefore, we obtain
\begin{align*}
\int_{\partial \Bers{x_0}} u^*(\varphi) \, d\pushThursMBers_x(\varphi)
&= \int_{\mathbb{P}\mathcal{ML}_x} \int_{\mathcal{PML}} U^*([\lambda]) \, d\Theta_t([\lambda]) \, d\boldsymbol{\nu}^x(t) \\
&= \int_{\mathbb{P}\mathcal{ML}_x} u(x) \, d\boldsymbol{\nu}^x(t) = u(x).
\end{align*}
\subsection{Proof of \Cref{coro:main1dash}}
We only check that the Poisson integral $\mathbb{P}(g)$ of an integrable function $g$ on $\mathcal{PML}$ is of class $C^1$.

First, notice that when $g$ is a bounded measurable function, for any $x \in \teich_{g,m}$, we have
\begin{align*}
|\mathbb{P}(g)(x)| 
&\le \int_{\partial \Bers{x_0}} |g(\varphi)| \, d\pushThursMBers_x(\varphi)
\le \|g\|_\infty \int_{\partial \Bers{x_0}} d\pushThursMBers_x(\varphi) = \|g\|_\infty.
\end{align*}
Hence, $\mathbb{P}(g)$ is bounded.

Now suppose $g$ is integrable on $\partial \Bers{x_0}$ with respect to $\pushThursMBers_x$. From \eqref{eq:change-base-points} and \eqref{eq:defintion-pushforward-Th}, the Poisson integral \eqref{eq:PI} of $g$ satisfies
\begin{align*}
\mathbb{P}(g)(x)
&= \int_{\partial \Bers{x_0}} g(\varphi) \, d\pushThursMBers_x(\varphi)
= \int_{\mathcal{PML}} g \circ \Xi_{x_0}([\lambda]) \, d\PThursM^x([\lambda]) \\
&= \int_{\mathcal{PML}} g \circ \Xi_{x_0}([\lambda])
\left(
\frac{\ext(x_0, \lambda)}{\ext(x, \lambda)}
\right)^{\convgenus}
d\PThursM^{x_0}([\lambda])
\end{align*}
for $x \in \teich_{g,m}$. Since the extremal length function is of class $C^1$, so is $\mathbb{P}(g)$.

By Gardiner's formula, we have
\begin{align*}
\left.\partial \mathbb{P}(g)\right|_{x}
&= \convgenus
\int_{\mathcal{PML}} g \circ \Xi_{x_0}([\lambda])
\left(
\frac{\ext(x_0, \lambda)}{\ext(x, \lambda)}
\right)^{\convgenus}
\frac{q_{\lambda, x}}{\ext(x, \lambda)}
d\PThursM^{x_0}([\lambda]), \\
\left.\overline{\partial} \mathbb{P}(g)\right|_{x}
&= \convgenus
\int_{\mathcal{PML}} g \circ \Xi_{x_0}([\lambda])
\left(
\frac{\ext(x_0, \lambda)}{\ext(x, \lambda)}
\right)^{\convgenus}
\frac{\overline{q_{\lambda, x}}}{\ext(x, \lambda)}
d\PThursM^{x_0}([\lambda])
\end{align*}
as $(1,0)$- and $(0,1)$-forms, respectively.

\begin{remark}
For a formulation of the complex cotangent spaces (i.e., the $(1,0)$- and $(0,1)$-cotangent spaces) over Teichmüller space in terms of holomorphic and anti-holomorphic quadratic differentials, see \cite{bounded-Function_Ergodic}, for instance.
\end{remark}

\subsection{Value distribution of psh functions}
Applying the disintegration argument given above, we discuss the F. and M. Riesz-type inequality (\Cref{thm:main2}) for non-negative and bounded plurisubharmonic functions on $\teich_{g,m}$.

\subsubsection{A lemma}

First, we provide a proof of the following result, which is well-known among experts.

\begin{lemma}
\label{lem:subharmonic_maximum}
Let $v$ be a non-negative bounded subharmonic function on the unit disk $\mathbb{D}$ such that $\log v$ is also subharmonic on $\mathbb{D}$.
Suppose there exists a measurable decomposition $\{E_k\}_{k=1}^\infty$ of $\partial \mathbb{D}$ and positive numbers $m_k$ ($k \in \mathbb{N}$) such that 
\[
\limsup_{r \to 1} v(re^{i\theta}) \le m_k \quad \text{for all } e^{i\theta} \in E_k.
\]
Then,
\[
v(0) \le \prod_{k=1}^\infty m_k^{\Theta(E_k)},
\]
where $\Theta(E_k) = \int_{E_k} \frac{d\theta}{2\pi}$ is the normalized Lebesgue measure on $\partial \mathbb{D}$.
\end{lemma}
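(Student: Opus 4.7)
The plan is to apply the sub-mean value inequality to the log-subharmonic function $\log v$ and then pass to the radial limit $r \to 1$ via Fatou's lemma.

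I would begin by disposing of the trivial case $v(0) = 0$: since each $m_k > 0$, the inequality $v(0) \le \prod_k m_k^{\Theta(E_k)}$ is automatic. So assume $v(0) > 0$. By hypothesis $\log v$ is subharmonic on $\mathbb{D}$ (possibly taking the value $-\infty$), and the sub-mean value property gives
\[
\log v(0) \le \frac{1}{2\pi} \int_0^{2\pi} \log v(re^{i\theta})\,d\theta
\]
for every $r \in (0,1)$.

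The next step is to let $r \nearrow 1$. Because $v$ is uniformly bounded by some $M > 0$, we have $\log v \le \log M$, so the integrands $\log M - \log v(re^{i\theta})$ are non-negative; applying (the ordinary) Fatou's lemma to this non-negative family yields the reverse-Fatou inequality
\[
\limsup_{r \to 1} \frac{1}{2\pi} \int_0^{2\pi} \log v(re^{i\theta})\,d\theta \le \frac{1}{2\pi} \int_0^{2\pi} \limsup_{r \to 1} \log v(re^{i\theta})\,d\theta.
\]
Since $\log$ is continuous and monotone on $(0,\infty)$ (with the convention $\log 0 = -\infty$), the hypothesis $\limsup_{r \to 1} v(re^{i\theta}) \le m_k$ on $E_k$ yields $\limsup_{r \to 1} \log v(re^{i\theta}) \le \log m_k$ on $E_k$. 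Inserting this pointwise bound and recalling that $\{E_k\}$ is a measurable decomposition of $\partial \mathbb{D}$ gives
\[
\log v(0) \le \sum_{k=1}^{\infty} \Theta(E_k) \log m_k,
\]
and exponentiation yields the claim.

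The only point that genuinely needs attention is the reverse-Fatou step, which is why the boundedness of $v$ is indispensable: one truly applies Fatou only to the non-negative family $\log M - \log v(re^{i\theta})$. A minor secondary issue is measurability of $\theta \mapsto \limsup_{r \to 1} \log v(re^{i\theta})$; this is handled by replacing the continuous limit with the $\limsup$ along a fixed countable sequence $r_n \nearrow 1$, which is permissible because upper semicontinuity of $\log v$ makes the pointwise $\limsup$ equal to its value along any dense countable subset of radii. I do not anticipate any other obstacle.
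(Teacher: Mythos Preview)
Your argument is correct and is essentially identical to the paper's own proof: sub-mean value inequality for $\log v$, then a reverse-Fatou step using the upper bound $v\le M$, then the pointwise bound $\log m_k$ on each $E_k$. If anything, your write-up is more careful---the paper simply invokes ``Fatou's theorem'' for the $\limsup$-integral exchange, whereas you spell out why boundedness above is what makes it work; your remark about upper semicontinuity is unnecessary (taking $\limsup$ along any fixed sequence $r_n\nearrow 1$ already suffices, since the hypothesis bounds the full radial $\limsup$), but this is a cosmetic point.
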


\begin{proof}
Since $\log v$ is subharmonic and bounded, by Fatou's theorem,
\begin{align*}
\log v(0)
&\le \limsup_{r \to 1} \int_0^{2\pi} \log v(re^{i\theta}) \frac{d\theta}{2\pi} \\
&\le \int_0^{2\pi} \limsup_{r \to 1} \log v(re^{i\theta}) \frac{d\theta}{2\pi}
\le \sum_{k=1}^\infty \Theta(E_k) \log m_k,
\end{align*}
where the last term is bounded.
\end{proof}

\subsubsection{Proof of \Cref{thm:main2}}
For $t \in \mathbb{P}\mathcal{ML}_x$ and $k \in \mathbb{N}$, let $E_k' = \Xi_x^{-1}(E_k)$ and
\[
E_k'(t) = E_k' \cap (\Pi^x)^{-1}(t).
\]
Then $\{E_k'\}_{k=1}^\infty$ is a measurable decomposition of $\mathcal{PML}$.

Let $q_t \in \mathcal{Q}_x \setminus \{0\}$ be the quadratic differential corresponding to $t \in \mathbb{P}\mathcal{ML}_x$.
Fix $s \in \mathbb{N}$ and $M > 0$ such that $v(z) \le M$ and $m_k \le M$ for all $z \in \teich_{g,m}$ and $k \in \mathbb{N}$. Applying \Cref{lem:subharmonic_maximum} to $v \circ \Phi_{q_t}$, we obtain
\[
\log v(x) \le \sum_{k=1}^s \Theta(E_k'(t)) \log m_k + \left(1 - \sum_{k=1}^s \Theta(E_k'(t)) \right) \log M.
\]
By Fubini's theorem,
\begin{align*}
\log v(x) &\le \sum_{k=1}^s \log m_k \int_{\mathbb{P}\mathcal{ML}_x} \Theta(E_k'(t)) \, d\boldsymbol{\nu}^x(t) \\
&\quad + \log M \int_{\mathbb{P}\mathcal{ML}_x} \left(1 - \sum_{k=1}^s \Theta(E_k'(t)) \right) d\boldsymbol{\nu}^x(t) \\
&= \sum_{k=1}^s \PThursM^x(E_k') \log m_k + \left(1 - \sum_{k=1}^s \PThursM^x(E_k') \right) \log M \\
&= \sum_{k=1}^s \PThursM^x(E_k') \log m_k + \PThursM^x\left(\mathcal{PML} \setminus \bigcup_{k=1}^s E_k' \right) \log M.
\end{align*}
Since the right-hand side is non-increasing in $s$, taking the limit as $s \to \infty$ yields
\[
\log v(x) \le \sum_{k=1}^\infty \PThursM^x(E_k') \log m_k = \sum_{k=1}^\infty \pushThursMBers_x(E_k) \log m_k,
\]
which gives the desired result.

\section{Mean value theorem}
\label{sec:MVT-JM}
Let $\Omega$ be a bounded domain in a complex Euclidean space.
A positive regular Borel measure $\mu$ on $\overline{\Omega}$ is said to be a \emph{Jensen measure} with barycenter $z \in \overline{\Omega}$ for continuous plurisubharmonic functions if
\[
u(z) \le \int_{\overline{\Omega}} u \, d\mu
\]
holds for every continuous function $u$ on $\overline{\Omega}$ that is plurisubharmonic on $\Omega$ (cf.\ e.g., \cite{MR2207205, MR1821089}).

Let $x \in \teich_{g,m}$ and $r > 0$.
Define
\[
B(x,r) = \{ y \in \teich_{g,m} \mid d_T(x,y) \le r \}, \quad S(x,r) = \partial B(x,r).
\]
Let $\boldsymbol{\mu}_{x;r}$ be the pushforward of the measure $\PThursM^x$ on $\mathcal{PML}$ to $S(x,r)$ via the homeomorphism
\[
\mathcal{PML} \ni [\lambda] \mapsto \ray_\lambda^x(r) \in S(x,r).
\]

\begin{theorem}[Jensen measures and the mean value property]
\label{thm:Jencen_mean_value}
The measure $\boldsymbol{\mu}_{x;r}$ is a Jensen measure on $B(x,r)$ with barycenter $x$ in the sense that
\[
u(x) \le \int_{B(x,r)} u \, d\boldsymbol{\mu}_{x;r} = \int_{S(x,r)} u \, d\boldsymbol{\mu}_{x;r}
\]
for every continuous function $u$ on $B(x,r)$ that is plurisubharmonic in the interior.
Moreover, if $u$ is pluriharmonic in the interior, then the mean value property
\begin{equation}
\label{eq:pluriharmonic-mean-value}
u(x) = \int_{S(x,r)} u \, d\boldsymbol{\mu}_{x;r}
\end{equation}
holds.
\end{theorem}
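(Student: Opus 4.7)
The plan is to follow the same disintegration strategy used in the proof of \Cref{thm:main1}, but to replace the boundary Fatou-theorem step by the classical sub--mean-value inequality for subharmonic functions on a circle of radius $\tanh r$ inside the unit disk. The essential input is that the Teichmüller disk $\Phi_{q_t}\colon \mathbb{D}\to\teich_{g,m}$ is a holomorphic isometry with $\Phi_{q_t}(0)=x$, so that for any continuous plurisubharmonic $u$ on $B(x,r)$ the pull-back $u\circ\Phi_{q_t}$ is a continuous subharmonic function on the closed disk $\{|z|\le\tanh r\}\subset\mathbb{D}$.

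First I would fix $x\in\teich_{g,m}$ and $r>0$, and for each $t\in\mathbb{P}\mathcal{ML}_x$ choose a unit-norm representative $q_t\in\mathcal{Q}_x$ with $\Pi^x([v(q_t)])=t$. By \eqref{eq:Teichmuller-rays-disks} one has $\Phi_{q_t}(\tanh(r)e^{i\theta})=\ray_{v(e^{-i\theta}q_t)}^{x}(r)$, and the sub--mean-value inequality on the circle of radius $\tanh r$ gives
\[
u(x)=u\bigl(\Phi_{q_t}(0)\bigr)\le \int_0^{2\pi} u\bigl(\Phi_{q_t}(\tanh(r)e^{i\theta})\bigr)\,\frac{d\theta}{2\pi}
=\int_{\mathcal{PML}} u\bigl(\ray_\lambda^{x}(r)\bigr)\,d\Theta_t([\lambda]),
\]
where the last equality uses the definition of the angle measure $\Theta_t$ as the push-forward of $d\theta/2\pi$ under $e^{i\theta}\mapsto[v(e^{-i\theta}q_t)]$. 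Integrating this inequality against the probability measure $\boldsymbol{\nu}^x$ on $\mathbb{P}\mathcal{ML}_x$, applying \Cref{prop:dis_inte_angle} (which replaces $\boldsymbol{m}_t$ by $\Theta_t$) and the disintegration theorem, I obtain
\[
u(x)\le \int_{\mathbb{P}\mathcal{ML}_x}\!\!\int_{\mathcal{PML}} u\bigl(\ray_\lambda^{x}(r)\bigr)\,d\Theta_t([\lambda])\,d\boldsymbol{\nu}^x(t)
=\int_{\mathcal{PML}} u\bigl(\ray_\lambda^{x}(r)\bigr)\,d\PThursM^x([\lambda])
=\int_{S(x,r)} u\,d\boldsymbol{\mu}_{x;r},
\]
the last step being the definition of $\boldsymbol{\mu}_{x;r}$ as the push-forward of $\PThursM^x$ under $[\lambda]\mapsto\ray_\lambda^{x}(r)$. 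The equality $\int_{B(x,r)} u\,d\boldsymbol{\mu}_{x;r}=\int_{S(x,r)} u\,d\boldsymbol{\mu}_{x;r}$ is immediate since $\boldsymbol{\mu}_{x;r}$ is supported on $S(x,r)$.

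For the pluriharmonic case, $u\circ\Phi_{q_t}$ is harmonic on $\mathbb{D}$, so the circle mean-value holds as an equality, and the same integration yields \eqref{eq:pluriharmonic-mean-value}. I do not expect a serious obstacle: the conceptual work has already been done in the proof of \Cref{thm:main1} and in \Cref{prop:dis_inte_angle}. The only points requiring care are (i) the measurability of the $\boldsymbol{\nu}^x$-a.e.\ choice $t\mapsto q_t$, which is standard for a Borel measure on a finite-dimensional space, and (ii) the interchange of integrals via Fubini, justified because $u$ is bounded on the compact set $B(x,r)$ and both measures are probability measures.
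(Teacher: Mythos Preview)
Your proposal is correct and follows essentially the same route as the paper: apply the sub--mean-value inequality on each Teichm\"uller disk $\Phi_{q_t}$, then integrate over $\mathbb{P}\mathcal{ML}_x$ using the disintegration theorem and \Cref{prop:dis_inte_angle}. Your use of the radius $\tanh r$ is in fact more careful than the paper's own write-up, which writes the circle as $\{|\lambda|=r\}$ but means the same thing via \eqref{eq:Teichmuller-rays-disks}.
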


\begin{proof}
For each $t \in \mathbb{P}\mathcal{ML}_x$, let $q_t \in \mathcal{Q}_x \setminus \{0\}$ be the corresponding holomorphic quadratic differential.
Since $u \circ \Phi_{q_t}$ is continuous on the closed disk $\{|\lambda| \le r\}$ and subharmonic in its interior, we have
\begin{equation}
\label{eq:subharmonic}
u(x) = u \circ \Phi_{q_t}(0) \le \int_0^{2\pi} u \circ \Phi_{q_t}(r e^{i\theta}) \frac{d\theta}{2\pi}.
\end{equation}
By the disintegration theorem and \Cref{prop:dis_inte_angle}, 
\begin{align}
\int_{B(x,r)} u \, d\boldsymbol{\mu}_{x;r}
&= \int_{S(x,r)} u \, d\boldsymbol{\mu}_{x;r} = \int_{\mathcal{PML}} u(\ray_\lambda^x(r)) \, d\PThursM^x([\lambda])
\label{eq:MVP}
\\
&= \int_{\mathbb{P}\mathcal{ML}_x} \left( \int_{\mathcal{PML}} u(\ray_\lambda^x(r)) \, d\Theta_t([\lambda]) \right) d\boldsymbol{\nu}^x(t)
\nonumber \\
&= \int_{\mathbb{P}\mathcal{ML}_x} \left( \int_0^{2\pi} u \circ \Phi_{q_t}(r e^{i\theta}) \frac{d\theta}{2\pi} \right) d\boldsymbol{\nu}^x(t)
\nonumber \\
&\ge \int_{\mathbb{P}\mathcal{ML}_x} u(x) \, d\boldsymbol{\nu}^x(t) = u(x).
\nonumber
\end{align}
If $u$ is pluriharmonic in the interior of $B(x,r)$, then $u \circ \Phi_{q_t}$ is harmonic on $\{ |z| < r \}$.
By the classical mean value property for harmonic functions, equality holds in \eqref{eq:subharmonic}.
Hence the equality in \eqref{eq:pluriharmonic-mean-value} follows from the above computation.
\end{proof}

\section{Conclusion}
\label{sec:conclusion}

To conclude this paper, we present the following conjectures and questions related to our results.

\subsection{}
Developing function theory on Teichm\"uller space for applications to low-dimensional topology remains an important open problem. In a forthcoming paper \cite{bounded-Function_Ergodic}, we apply \Cref{thm:main1} and \Cref{coro:main1dash} to study the dynamics of subgroups of the mapping class group.

Fatou~\cite{MR1555035} showed that any bounded harmonic function on $\mathbb{D}$ admits a non-tangential limit (cf.\ e.g., \cite{MR0114894}). It is natural to ask whether the radial limit given in \Cref{prop:radial_limit_theorem} can be strengthened to a non-tangential limit. An affirmative answer to this question would be significant for understanding the conical limit sets of subgroups of the mapping class group, particularly the limit sets of convex cocompact subgroups (cf.\ \cite{MR1914566}, \cite{MR2465691}, and \cite{MR624833}).

\subsection{}
In his proof of the Poisson integral formula on a hyperconvex domain $\Omega \subset \mathbb{C}^n$, Demailly~\cite{MR881709} applied the Lelong-Jensen formula:
\[
\int_{S(r)} V\, d\mu_{\varphi,r} - \int_{B(r)} V\, (dd^c \varphi)^n = \int_{B(r)}(r-\varphi) dd^c V \wedge (dd^c \varphi)^{n-1},
\]
to define a family $\{\mu_{\varphi,r}\}_{r<0}$ of Monge-Amp\`ere measures associated to $\varphi$.  
Here, $V$ is a plurisubharmonic function on $\Omega$, $\varphi$ is a plurisubharmonic exhaustion function, $S(r) = \{\varphi = r\}$, and $B(r) = \{\varphi < r\}$.  
Demailly considered the family of Monge-Amp\`ere measures associated to the pluricomplex Green function and took the limit as $r \to 0$ to construct the pluriharmonic measure.

It is a natural conjecture that the family $\{\boldsymbol{\mu}_{x;r}\}_{r>0}$ in \S\ref{sec:MVT-JM} arises from the Monge-Amp\`ere measures associated to the pluricomplex Green function on $\teich_{g,m}$.  
It is also reasonable to ask whether the measure $\boldsymbol{\mu}_{x;r}$ coincides with the pluriharmonic measure on the open ball $\{ y \in \teich_{g,m} \mid d_T(x,y) < r \}$ with pole at $x$, in the sense of Demailly.

\subsection{}
Masur~\cite[Proposition 2.1]{MR1383491} observed a mean value property for pluriharmonic functions on $\teich_g$ via a random walk, which raises the question of how the pluriharmonic measure described here relates to the hitting measure of Masur's random walk. Similarly, Kakutani~\cite{MR0014647} showed that the harmonic measure is the hitting measure of Brownian motion on $\mathbb{D}$. This motivates the question of whether there exists a stochastic process -- specifically, a Brownian motion or more generally a Markov process -- on the Bers slice or Thurston compactification whose hitting measure coincides with the pluriharmonic measure considered in this paper.

\bibliographystyle{plain}
\bibliography{Miyachi_bdd_ph_Poisson.bbl}

\end{document}